\newtheorem{theorem}{Theorem}[section]
\newtheorem{corollary}[theorem]{Corollary}
\newtheorem{lemma}[theorem]{Lemma}
\newtheorem{proposition}[theorem]{Proposition}
\theoremstyle{definition}
\newtheorem{definition}[theorem]{Definition}
\newtheorem{remark}[theorem]{Remark}
\theoremstyle{remark}
\renewcommand{\theclaim}{\textup{\theclaim}}
\newtheorem*{acknowledgements}{Acknowledgements}
\numberwithin{equation}{section}
\newcommand\restr[2]{{
  \left.\kern-\nulldelimiterspace 
  #1 
  \vphantom{\big|} 
  \right|_{#2} 
  }}
\def\openone
\newbox\ipbox
\newcommand{\ip}[2]{\left\langle #1\, , \,#2\right\rangle}
\newcommand{\diracb}[1]{\left\langle #1\mathrel{\mathchoice

{\setbox\ipbox=\hbox{$\displaystyle \left\langle\mathstrut
#1\right.$}

\vrule height\ht\ipbox width0.25pt depth\dp\ipbox}

{\setbox\ipbox=\hbox{$\textstyle \left\langle\mathstrut
#1\right.$}

\vrule height\ht\ipbox width0.25pt depth\dp\ipbox}

{\setbox\ipbox=\hbox{$\scriptstyle \left\langle\mathstrut
#1\right.$}

\vrule height\ht\ipbox width0.25pt depth\dp\ipbox}

{\setbox\ipbox=\hbox{$\scriptscriptstyle \left\langle\mathstrut
#1\right.$}

\vrule height\ht\ipbox width0.25pt depth\dp\ipbox}

}\right. }
\newcommand{\dirack}[1]{\left. \mathrel{\mathchoice

{\setbox\ipbox=\hbox{$\displaystyle \left.\mathstrut
#1\right\rangle$}

\vrule height\ht\ipbox width0.25pt depth\dp\ipbox}

{\setbox\ipbox=\hbox{$\textstyle \left.\mathstrut
#1\right\rangle$}

\vrule height\ht\ipbox width0.25pt depth\dp\ipbox}

{\setbox\ipbox=\hbox{$\scriptstyle \left.\mathstrut
#1\right\rangle$}

\vrule height\ht\ipbox width0.25pt depth\dp\ipbox}

{\setbox\ipbox=\hbox{$\scriptscriptstyle \left.\mathstrut
#1\right\rangle$}

\vrule height\ht\ipbox width0.25pt depth\dp\ipbox}

} #1\right\rangle}
\newcommand{\cj}[1]{\overline{#1}}
\newcommand{\bz}{\mathbb{Z}}
\newcommand{\br}{\mathbb{R}}
\newcommand{\bt}{\mathbb{T}}
\newcommand{\bn}{\mathbb{N}}
\newcommand{\nar}[1]{\stackrel{#1}{\rightarrow}}
\def\blfootnote{\xdef\@thefnmark{}\@footnotetext}
\renewcommand{\mod}{\operatorname{mod}}
\def\H{\mathcal{H}}
\def\-{^{-1}}
\def\U{\mathcal{U}}
\begin{document}

\title[Spectra of measures and wandering vectors]{Spectra of measures and wandering vectors}
\author{Dorin Ervin Dutkay}
\blfootnote{}
\address{[Dorin Ervin Dutkay] University of Central Florida\\
    Department of Mathematics\\
    4000 Central Florida Blvd.\\
    P.O. Box 161364\\
    Orlando, FL 32816-1364\\
U.S.A.\\} \email{Dorin.Dutkay@ucf.edu}

\author{Palle E.T. Jorgensen}
\address{[Palle E.T. Jorgensen]University of Iowa\\
Department of Mathematics\\
14 MacLean Hall\\
Iowa City, IA 52242-1419\\}\email{jorgen@math.uiowa.edu}

\thanks{}
\subjclass[2000]{42A32,05B45,43A25 } 
\keywords{Fuglede conjecture,  spectrum, unitary one-parameter groups, spectral pairs, locally compact Abelian groups, Fourier analysis, wandering vector.}

\begin{abstract} We present a characterization of the sets that appear as Fourier spectra of measures in terms of the existence of a strongly continuous representation of the ambient group that has a wandering vector for the given set.
\end{abstract}
\maketitle \tableofcontents

\section{Introduction}

\begin{definition}\label{def1.1}
For $\lambda\in\br^d$, denote by 
$$e_\lambda(x)=e^{2\pi i\lambda\cdot x},\quad(x\in\br^d)$$
Let $\mu$ be a Borel probability measure on $\br^d$. We say that the measure $\mu$ is {\it spectral} if there exists a set $\Lambda$ in $\br^d$, called a {\it spectrum} of $\mu$, such that the set $\{e_\lambda :\lambda\in\Lambda\}$ is an orthonormal basis for $L^2(\mu)$. A Lebesgue measurable subset $\Omega$ of $\br^d$ is called {\it spectral} if the renormalized Lebesgue measure on $\Omega$ is spectral. We say that $\Omega$ {\it tiles} $\br^d$ by translations if there exists a set $\mathcal T$ in $\br^d$ such that $\{\Omega+t : t\in\mathcal T\}$ is a partition of $\br^d$ (up to Lebesgue measure zero). 

A finite subset $A$ of $\br^d$ is called spectral if the measure $\frac{1}{|A|}\sum_{a\in A}\delta_a$ is spectral, where $\delta_a$ is the Dirac measure at $a$. 
\end{definition}

Fuglede's conjecture \cite{Fug74} asserts that a Lebesgue measurable subset $\Omega$ of $\br^d$ is spectral if and only if it tiles $\br^d$ by translations. Tao \cite{Tao04} found a union of cubes, in dimension 5 or higher, which is spectral but does not tile. Later, Tao's counterexample was improved by Matolcsi and his collaborators \cite{KM06,FaMaMo06}, to disprove Fuglede's conjecture in both directions, down to dimension 3. In dimension 1 and 2, the conjecture is still open in both directions. 

Lebesgue measure is not the only measure that provides examples of spectral sets. In \cite{JP98}, Jorgensen and Pedersen showed that the Hausdorff measure on a fractal Cantor set with scale 4 is also spectral and a spectrum has the form:
$$\Lambda=\{\sum_{k=0}^n 4^k l_k : l_k\in\{0,1\}, n\in\bn\},$$
but there are many more spectra for the same measure as shown in \cite{DHS09}. Many more examples of fractal spectral measures have been constructed since \cite{Str00,LaWa02,DuJo07b}.

Finite spectral sets of integers are closely tied \cite{Lab02} to a conjecture of Coven and Meyerowitz \cite{CoMe99} on translational tilings of $\bz$. 

In this paper we focus on the following question: which sets appear as spectra of some measure? The main result is a characterization of spectra of measures in terms of the existence of a strongly continuous representation of the ambient group which has a {\it wandering vector} for the given set. Wandering vectors are vectors that generate orthonormal bases under the action of some system of unitary operators. They are ubiquitous throughout mathematics \cite{HaLa00,HaLa01,Iz11,CPT11,Be05}.

\begin{definition}\label{defw1}
Let $\U$ be a family of unitary operators acting on a Hilbert space $\H$. We say that a vector $v_0\neq 0$ in $\H$ is a {\it wandering vector} if $\{Uv_0 : U\in\U\}$ is an orthogonal family of vectors. 
\end{definition}

We keep a higher level of generality and work with locally compact abelian groups.

\begin{definition}\label{def2.1}

Let $\Gamma$ be a locally compact abelian group and let $G$ be its dual group (of all continuous characters); we will write $G=\widehat \Gamma$ and $\widehat G=\widehat{\widehat\Gamma}\approx\Gamma$ where the isomorphism $\widehat{\widehat\Gamma}\approx\Gamma$ is the Pontryagin duality theorem; see \cite{Rud90}. For a point $\gamma\in\Gamma$, write 
\begin{equation}
\ip{\gamma}{g}=e_\gamma(g),\quad(g\in G).
\label{eq5.1}
\end{equation}

We say that a subset $S$ of $\Gamma$ is a {\it spectrum} for a Borel probability measure $\mu_0$ on $G$ if the set $\{e_\gamma :\gamma\in S\}$ is an orthonormal basis for $L^2(\mu_0)$. 
\end{definition}

Because of the interest in Fourier frames \cite{OSANN} and since it does not affect the simplicity of the statement of our theorem, we formulate it not just for orthormal bases of exponential functions but also for frames.
\begin{definition}\label{def1.2}
Let $A,B>0$ and let $\H$ be a Hilbert space a family of vectors $\{e_i : i\in I\}$ in $\H$ is called a {\it frame with bounds $A,B$} if 
$$A\|f\|^2\leq\sum_{i\in I}|\ip{e_i}{f}|^2\leq B\|f\|^2,\quad(f\in\H)$$

A subset $S$ of $\Gamma$ is a {\it frame spectrum} with bounds $A,B$ for a Borel probability measure $\mu_0$ on $G$ if the set $\{e_\gamma :\gamma\in S\}$ is a frame with bounds $A,B$ for $L^2(\mu_0)$. 
\end{definition}

\begin{theorem}\label{th5.1}
Let $S\subset\Gamma$ be an arbitrary subset. Then the subset $S$ is a spectrum/frame spectrum with bounds $A,B$ for a Borel probability measure $\mu_0$ on $G$ if and only if there exists a triple $(\H,v_0, U)$ where $\H$ is a complex Hilbert space, $v_0\in\H$, $\|v_0\|=1$ and $U(\cdot)$ is a strongly continuous representation of $\Gamma$ on $\H$ such that $\{U(\gamma)v_0 : \gamma\in S\}$ is an orthonormal basis/frame with bounds $A,B$ for $\H$. 

Moreover, in this case $\mu_0$ can be chosen such that
\begin{equation}
\ip{v_0}{U(\xi)v_0}_{\H}=\int_Ge_\xi(g)\,d\mu_0(g)\mbox{ for all }\xi\in \Gamma
\label{eq5.1.1}
\end{equation}
and there is an isometric isomorphism $W:L^2(G,\mu_0)\rightarrow\H$ such that 
\begin{equation}
We_\gamma=U(\gamma)v_0\mbox{ for all }\gamma\in\Gamma.
\label{eq5.1.2}
\end{equation}

\end{theorem}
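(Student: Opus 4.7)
The plan is to prove the two implications separately, with essentially all of the work concentrated on the ``if'' direction, where one must reconstruct the measure $\mu_0$ from the triple $(\H,v_0,U)$.

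For the ``only if'' direction I would argue directly. Given that $S$ is a (frame) spectrum for $\mu_0$, take $\H=L^2(G,\mu_0)$, let $v_0$ be the constant function $\mathbf 1$, and define $U(\gamma)f:=e_\gamma\cdot f$. Since characters are unimodular, $U$ is unitary; strong continuity follows from dominated convergence applied to $\gamma\mapsto e_\gamma$ in $L^2(\mu_0)$ (using that $\mu_0$ is finite and each character is pointwise continuous in $\gamma$). Since $U(\gamma)v_0=e_\gamma$, the hypothesis on $S$ is exactly the desired wandering-vector property.

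For the converse, the key tool is Bochner's theorem (SNAG) for locally compact abelian groups \cite{Rud90}. Starting from $(\H,v_0,U)$, I would form the matrix coefficient $\phi(\xi):=\ip{v_0}{U(\xi)v_0}_\H$. Strong continuity of $U$ makes $\phi$ continuous, and the identity $U(\xi)^*U(\eta)=U(\eta-\xi)$ together with
\[
\sum_{j,k}c_j\bar c_k\,\ip{U(\xi_j)v_0}{U(\xi_k)v_0}_\H=\Big\|\sum_j \bar c_j\,U(\xi_j)v_0\Big\|^2\geq 0
\]
shows $\phi$ is positive definite on $\Gamma$, with $\phi(0)=\|v_0\|^2=1$. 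Bochner's theorem then produces a unique Borel probability measure $\mu_0$ on $G=\widehat\Gamma$ satisfying \eqref{eq5.1.1}.

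To finish I would define $W_0$ on the subspace $\mathcal D:=\operatorname{span}\{e_\gamma:\gamma\in\Gamma\}\subset L^2(G,\mu_0)$ by $W_0 e_\gamma:=U(\gamma)v_0$, extended linearly. Relation \eqref{eq5.1.1} yields, for all $\gamma,\gamma'\in\Gamma$, the identity $\ip{W_0 e_\gamma}{W_0 e_{\gamma'}}_\H=\ip{e_\gamma}{e_{\gamma'}}_{L^2(\mu_0)}$, so $W_0$ is a well-defined isometry. Density of $\mathcal D$ in $L^2(G,\mu_0)$ comes from Stone--Weierstrass: on any compact $K\subset G$ the characters form a self-adjoint ($\overline{e_\gamma}=e_{-\gamma}$), unital, and point-separating ($\widehat G=\Gamma$) subalgebra of $C(K)$, hence are uniformly dense in $C(K)$, and inner regularity of $\mu_0$ upgrades this to density in $L^2(G,\mu_0)$. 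Thus $W_0$ extends uniquely to an isometry $W:L^2(G,\mu_0)\to\H$ satisfying \eqref{eq5.1.2}. Its range is closed and contains $\{U(\gamma)v_0:\gamma\in S\}$, which by hypothesis is total in $\H$, so $W$ is surjective, hence an isometric isomorphism. Transporting the orthonormal basis/frame $\{U(\gamma)v_0:\gamma\in S\}$ back through $W^{-1}$ gives that $\{e_\gamma:\gamma\in S\}$ has the same property, with the same bounds, in $L^2(G,\mu_0)$, which is exactly the (frame) spectrum condition. The step requiring most care is bookkeeping on conjugation conventions so that \eqref{eq5.1.1} comes out as stated rather than as its complex conjugate; everything else is essentially routine once Bochner and Stone--Weierstrass are in place.
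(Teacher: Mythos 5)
Your proof is correct, and the forward direction coincides with the paper's. In the converse direction you take a genuinely different (though closely related) route: the paper invokes the SNAG theorem to get a projection-valued measure $P_U$ on $G$ with $U(\xi)=\int_G e_\xi\,dP_U$ and then defines $d\mu_0(g)=\|dP_U(g)v_0\|^2$, whereas you apply the scalar Bochner theorem to the positive definite matrix coefficient $\phi(\xi)=\ip{v_0}{U(\xi)v_0}$ and obtain $\mu_0$ as the representing measure of $\phi$. The two constructions yield the same measure (the diagonal of the PVM at $v_0$ is exactly the Bochner measure of $\phi$), but your version is more economical: it needs only the scalar Bochner theorem rather than the full spectral theorem for the representation, and it makes \eqref{eq5.1.1} true by construction instead of requiring the computation \eqref{eq5.1.10}. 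The paper's PVM formulation has the mild advantage of making Remark \ref{rem5.2} (restriction to the cyclic subspace $\H_{v_0}$ as a reducing subspace) immediate. From \eqref{eq5.1.1} onward the two arguments are identical: the same intertwiner $W$, the same density claim for trigonometric polynomials in $L^2(G,\mu_0)$ (which you, like the paper, justify only briefly --- the cleanest argument is that $f\perp e_\gamma$ for all $\gamma$ forces the finite measure $\cj{f}\,d\mu_0$ to have vanishing Fourier transform, hence to be zero), and the same surjectivity and transport of the basis/frame. Your attention to the conjugation convention so that \eqref{eq5.1.1} and not its conjugate comes out is a point the paper leaves implicit.
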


In section 3 we illustrate how this result can be used to determine spectral sets of a particular form. Our focus is on the techniques, more than on the examples themselves. As a corollary, we describe all spectral sets with 3 elements.

\section{Proof of Theorem \ref{th5.1}}
For simplicity, we will prove Theorem \ref{th5.1} for orthonormal basis; for frames, the proof is identical, just replace the words ``orthonormal basis'' by the words ``frame with bounds $A,B$''.
\begin{proof}
Suppose $S$ is a spectrum for $\mu_0$. Set $\H=L^2(G,\mu_0)$, $v_0=$the constant function 1 in $L^2(G,\mu_0)$ and take, for $\xi\in\Gamma$, $U(\xi)$ on $L^2(G,\mu_0)$ to be the multiplication operator , i.e., 
\begin{equation}
(U(\xi)f)(g)=e_\xi(g)f(g)\quad(f\in L^2(G,\mu_0),\xi\in\Gamma,g\in G)
\label{eq5.1.5}
\end{equation}
A simple check shows that all the requirements are satisfied and the isomorphism $W$ is just the identity. 

Conversely, suppose $(\H,v_0,U)$ is a triple such that $\{U(\gamma)v_0 : \gamma\in S\}$ is orthonormal in $\H$. Then by the Stone-Naimark-Ambrose-Godement theorem (the SNAG theorem \cite{Ma91,Ma04}), there is an orthogonal projection valued measure $P_U$ defined on the Borel subsets of $G$, such that 

\begin{equation}
U(\xi)=\int_Ge_\xi(g)\,dP_U(g)\quad(\xi\in\Gamma)
\label{eq5.1.8}
\end{equation}
Now set 
\begin{equation}
d\mu_0(g):=\|dP_U(g)v_0\|_\H^2
\label{eq5.1.9}
\end{equation}
and note that $\mu_0$ will then be a Borel probability measure on $G$. 

We prove that \eqref{eq5.1.1} holds. 

Let $\xi\in\Gamma$. We have 
\begin{equation}
\int_Ge_\xi(g)\,d\mu_0(g)=\int_Ge_\xi(g)\|dP_U(g)v_0\|_\H^2=\int_Ge_\xi(g)\,\ip{v_0}{dP_U(g)v_0}
\label{eq5.1.10}
\end{equation}
$$=\ip{v_0}{\left(\int_Ge_\xi(g)\,dP_U(g)\right)v_0}=\ip{v_0}{U(\xi)v_0}.$$

We now show that there is an isometric isomorphism $W:L^2(G,\mu_0)\rightarrow\H$ that satisfies \eqref{eq5.1.2}. The fact that $\{e_\gamma :\gamma\in S\}$ is an orthonormal basis will follow from this. 
Define $We_\gamma=U(\gamma)v_0$ for $\gamma\in\Gamma$. We prove that the inner products are preserved by $W$ and this shows that $W$ can be extended to a well defined isometry from $L^2(G,\mu_0)$ onto $\H$; it is onto because $U(\gamma)v_0$ with $\gamma\in S$ is an orthonormal basis for $\H$, and it will be defined everywhere because the functions $e_\gamma$, $\gamma\in\Gamma$ are uniformly dense on any compact subset of $G$ so they are dense in $L^2(G,\mu_0)$. 
But according to \eqref{eq5.1.10}, we have for $\gamma,\gamma'\in\Gamma$:
$$\ip{U(\gamma)v_0}{U(\gamma')v_0}=\int_G\cj{e_\gamma(g)}e_{\gamma'}(g)\,d\mu_0(g).$$
\end{proof}

\begin{remark}\label{rem5.2}
In Theorem \ref{th5.1}, for $S$ to be the spectrum of some measure $\mu_0$, it is enough for the family $\{U(\gamma)v_0 : \gamma\in S\}$ to be an orthogonal basis for the closed linear span of $\H_{v_0}:=\{U(\xi)v_0 : \xi\in\Gamma\}$, because, in this case $\H_{v_0}$ is a reducing subspace for $U$, and one can restrict the representation to it. 
\end{remark}

\begin{remark}\label{rem2.1}
For $\psi$ continuous and compactly supported on $G$ we have
\begin{equation}
W\psi=\int_\Gamma\widehat\psi(\xi)U(\xi)v_0\,d\xi
\label{eq5.1.11}
\end{equation}
where $d\xi$ denotes the Haar measure on $\Gamma$ and $\widehat\psi$ denotes the Fourier transform, i.e., 
$$\widehat\psi(\xi)=\int_G\cj{e_\gamma (g)}\psi(g)\,dg,\mbox{ and by the inversion formula, } \psi(g)=\int_\Gamma\widehat\psi(\xi)e_\xi(g)\,d\xi,\quad(\xi\in\Gamma,g\in G)$$

Indeed, for $\gamma\in\Gamma$, we have 
$$\ip{\int_\Gamma\widehat\psi(\xi)U(\xi)v_0\,d\xi}{U(\gamma)v_0}_\H=\int_\Gamma\cj{\widehat\psi(\xi)}\ip{U(\xi)v_0}{U(\gamma)v_0}_\H\,d\xi=\mbox{ (by \eqref{eq5.1.10})}$$
$$=\int_\Gamma\cj{\widehat\psi(\xi)}\int_G\cj{e_\xi(g)}e_\gamma(g)\,d\mu_0(g)\,d\xi=\mbox{(by Fubini)}=\int_G e_\gamma(g)\int_\Gamma\cj{\widehat\psi(\xi)e_\xi(g)\,d\xi}\,d\mu_0(g)$$$$
=\mbox{(by the inversion formula)}=\int_Ge_\gamma(g)\cj{\psi(g)}\,d\mu_0(g)=\ip{\psi}{e_\gamma}_{L^2(G,\mu_0)}=\ip{W\psi}{We_\gamma}_\H=\ip{W\psi}{U(\gamma)v_0}_\H.$$
But, since $U(\gamma)v_0$, $\gamma\in G$ span the entire space $\H$, equation \eqref{eq5.1.11} follows.

\end{remark}

\begin{remark}\label{rem2.2}
Let $A$ be a finite spectral subset of $\br^d$. Theorem \ref{th5.1} and its proof shows that one can chose a strongly continuous one parameter group $(U(t))_{t\in\br^d}$ defined on $l^2(A)$, $U(t)$ being the multiplication by the function $e_t$ restricted to $A$. Thus the spectrum of $U(t)$ is $e^{2\pi i t\cdot a}$.

\end{remark}

\section{Examples}
In this section we show how our result can be used to determine spectral sets of a particular form. We urge the reader to focus more on the techniques than on the examples themselves, since we believe that these techniques can be applied to more general situations. 

For simplicity, we will introduce some notations and present a few techniques that we will use in this section. 

We will consider finite subsets $A$ of $\br$ which we assume to be spectral. Note that, for finite sets, being spectral and being the spectrum of some measure are equivalent notions. If $A$ has spectrum $B$ then $B$ has to be finite, $|A|=|B|$ and the matrix $\frac{1}{\sqrt{|A|}}(e^{2\pi i ab})_{a\in A, b\in B}$ is unitary. Hence $A$ is also a spectrum for $B$. Conversely, if $A$ is the spectrum of some measure $\mu_0$ then $\mu_0$ is atomic, supported on a finite set $B$, and the measures of the points in the support have to be equal (see e.g. \cite{DuLa12}) and therefore $A$ is a spectrum for $B$ and vice-versa.

 By Theorem \ref{th5.1}, there is a Hilbert space $\H$, a strongly continuous one-parameter unitary group $(U(t))_{t\in\br}$ on $\H$ and a vector $v_0\in\H$ such that $\{U(a)v_0 : a\in A\}$ is an orthonormal basis for $\H$. We will identify $a=v_a:=U(a)v_0$, $a\in A$. We will denote by $\{a_1,\dots,a_m\}$ the linear subspace generated by the vectors $a_1,\dots,a_m$. We also write $a$ for the subspace $\{a\}$.

We write $\{a_1,\dots,a_m\}\nar{t}\{b_1,\dots,b_n\}$ if the unitary $U(t)$ maps the subspace $\{a_1,\dots,a_m\}$ into the subspace $\{b_1,\dots, b_n\}$.

\begin{remark}\label{rem2.3}
We know that the vectors $\{a: a\in A\}$ form an orthonormal basis for $\H$. Suppose $A=\{a_1,\dots, a_m\}\cup \{a_1',\dots,a_n'\}$ and also $A=\{b_1,\dots,b_m\}\cup\{b_1',\dots,b_n'\}$, both disjoint unions, distinct elements. Suppose $\{a_1,\dots,a_m\}\nar{t}\{b_1,\dots,b_m\}$. Then $\{a_1',\dots,a_n'\}\nar{t}\{b_1',\dots,b_n'\}$ because $U(t)$ is unitary so it maps orthogonal subspaces to orthogonal subspaces.

Also, suppose we have $\{a_1,\dots,a_p\}\nar{t}\{c_1,\dots,c_k,d_1,\dots,d_l\}$ and in addition $\{a_1',\dots,a_k'\}\nar{t}\{c_1,\dots,c_k\}$, where $\{a_1,\dots,a_p,a_1',\dots,a_k'\}$ are all distinct. Then $\{a_1,\dots,a_p\}\nar{t}\{d_1,\dots,d_l\}$, because $U(t)$ is unitary so $\{a_1,\dots,a_p\}$ must be mapped into the orthogonal complement of $\{c_1,\dots,c_k\}$ in $\{c_1,\dots,c_k,d_1,\dots,d_l\}$.

\end{remark}

\begin{definition}\label{def2.4}
Let $\H=\oplus_{i=1}^nV_i$ be an orthogonal decomposition of the Hilbert space $\H$. We say that a unitary $U$ on $\H$ {\it permutes} the subspaces $\{V_i\}$ if there exists a permutation $\sigma$ of $\{1,\dots,n\}$ such that 
$UV_i=V_{\sigma(i)}$ for all $i=1,\dots,n$. We say that $U$ permutes {\it non-trivially} if the permutation $\sigma$ is not the identity.
\end{definition}

\begin{lemma}\label{lem2.5}
Let $(U(t))_{t\in\br}$ be a strongly continuous one-parameter unitary group on $\H$. Let $a,b\in\br$, $a,b\neq0$. Suppose $U(a)$ and $U(b)$ permute some subspaces $\oplus_{i=1}^n V_i=\H$, one of them non-trivially. Then $a/b$ is rational. 

\end{lemma}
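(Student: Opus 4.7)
The plan is to use the fact that permuting a finite orthogonal decomposition defines a homomorphism to a finite symmetric group, and then to combine this with the classification of closed subgroups of $\br$ to force a rational ratio.

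First, I would let $G:=\{t\in\br : U(t)\text{ permutes the family }\{V_i\}_{i=1}^n\}$. Because $U(s+t)=U(s)U(t)$, the set $G$ is a subgroup of $\br$ containing $a$ and $b$, and the assignment $\varphi:G\to S_n$ sending $t$ to its induced permutation $\sigma_t$ is a group homomorphism. Let $K:=\ker\varphi=\{t\in\br : U(t)V_i=V_i\text{ for every }i\}$. Since $S_n$ is finite, $\varphi(a)$ and $\varphi(b)$ have finite orders, call them $p$ and $q$, and consequently $pa\in K$ and $qb\in K$.

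Next, I would show that $K$ is a \emph{closed} subgroup of $\br$. The argument is a routine use of strong continuity: if $t_m\to t$ with $t_m\in K$ and $v\in V_i$, then $U(t_m)v\in V_i$ for every $m$, and $U(t_m)v\to U(t)v$ by strong continuity; closedness of $V_i$ gives $U(t)v\in V_i$, and applying the same argument to $-t_m\to -t$ yields the reverse inclusion, so $U(t)V_i=V_i$ and $t\in K$.

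The classification of closed subgroups of $\br$ now leaves only three possibilities for $K$: $\{0\}$, all of $\br$, or $c\bz$ for some $c>0$. The case $K=\{0\}$ is excluded, because then $pa=qb=0$ forces $a=b=0$, contradicting the hypothesis $a,b\neq 0$. The case $K=\br$ is excluded because it would mean $U(t)V_i=V_i$ for every $t$, i.e.\ $\sigma_a=\sigma_b=\mathrm{id}$, contradicting the assumption that one of $U(a),U(b)$ permutes non-trivially. Thus $K=c\bz$, and one may write $pa=mc$, $qb=\ell c$ with integers $m,\ell$; since $a,b\neq 0$, both $m$ and $\ell$ are nonzero, whence
\begin{equation*}
\frac{a}{b}=\frac{mc/p}{\ell c/q}=\frac{mq}{\ell p}\in\mathbb{Q}.
\end{equation*}
The only genuine step is verifying that $K$ is closed in $\br$; everything else is bookkeeping with the homomorphism $\varphi$ and the structure of closed subgroups of the real line.
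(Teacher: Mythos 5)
Your proof is correct, and it takes a genuinely different route from the paper's. The paper argues by contradiction: if $a/b$ were irrational, the subgroup $\{ma+nb : m,n\in\bz\}$ would be dense in $\br$, and a strong-continuity argument shows that then every $U(t)$ permutes the $V_i$ with $t\mapsto\sigma_t$ locally constant; connectedness of $\br$ forces $\sigma_t\equiv\sigma_0=\mathrm{id}$, contradicting non-triviality. You instead work directly with the homomorphism $\varphi:t\mapsto\sigma_t$ into $S_n$, use finiteness of $S_n$ to place $pa$ and $qb$ in the kernel $K$, prove $K$ is closed via strong continuity, and then invoke the classification of closed subgroups of $\br$; the cases $K=\{0\}$ and $K=\br$ are ruled out by $a,b\neq0$ and by non-triviality respectively, leaving $K=c\bz$ and hence a rational ratio. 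Both arguments hinge on the same two ingredients (finiteness of $S_n$ and strong continuity), but yours isolates the continuity into the single clean statement that $K$ is closed, and replaces the density-plus-connectedness argument with the structure theorem for closed subgroups of $\br$; it also yields slightly more information, namely that all elements of $G$ with a common power of their permutation lie in a fixed lattice $c\bz$. The only point worth making explicit (which the paper's proof also glosses over) is that the permutation $\sigma_t$ is well defined and $\varphi$ is a homomorphism, which requires each $V_i\neq\{0\}$ --- otherwise two distinct permutations could implement the same unitary; this is the intended reading of an orthogonal decomposition, so it is not a gap.
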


\begin{proof}
Assume by contradiction that $a/b$ is irrational. Then the set $M:=\{ma+nb: m,n\in \bz\}$ is dense in $\br$. Also, since $U(ma+nb)=U(a)^mU(b)^n$  it follows that $U(ma+nb)$ also permutes the subspaces $V_i$. Let $\sigma_t$ be the permutation associated to $U(t)$ for $t\in M$. We show that every $U(t)$ permutes the subspaces $V_i$. 

Let $t\in\br$. Approximate $t$ by a sequence $t_n$ in $M$. Pick $v\in V_1$ with $\|v\|=1$. We have $U(t_n)v\in V_{\sigma_{t_n}(1)}$ and $U(t_n)v\rightarrow U(t)v$. This implies that $U(t_n)v$ are close together for $n$ large. But then $\sigma_{t_n}(1)$ and $\sigma_{t_m}(1)$ are equal for $n,m$ large because, otherwise $U(t_n)v\in V_{\sigma_{t_n}(1)}$ and $U(t_n)v\rightarrow U(t)v$ would lie in orthogonal subspaces and therefore the distance between them would be $\sqrt2$ by Pythagora's theorem. Consequently, $U(t)v$ must lie in the same subspace as $U(t_n)v$ for $n$ large. Varying $v$, we see that $U(t)$ permutes the subspaces $\{V_i\}$. This argument shows also that $\sigma_t$ and $\sigma_s$ are identical if $t$ is close to $s$. But then the function $t\mapsto \sigma_t$ is locally constant. Since $\br$ is connected, this means that $\sigma_t$ is constant. But $\sigma_0$ is the identity and one of $\sigma_a$ or $\sigma_b$ is not. The contradiction implies that $a/b$ is rational.
\end{proof}

\begin{lemma}\label{lem2.6}
Let $S$ be a subset of $\br$. Assume in addition that there exists $\alpha>0$ such that $S\subset\alpha\bz$. Suppose there exists a unitary $U$ on a Hilbert space $\H$ and a vector $v_0\in\H$ such that $\{U^sv_0 : \alpha s\in S\}$ is an orthonormal basis for $v_0$. 
Then there exists a measure $\mu_0$ on $[0,\frac1\alpha)$ such that $S$ is a spectrum for $\mu_0$. 
\end{lemma}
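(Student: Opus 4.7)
The plan is to transport the scalar spectral measure of $v_0$ under $U$ from the dual circle to an interval of length $1/\alpha$ in $\br$. First, I would apply the spectral theorem to $U$ to obtain a projection-valued measure $E$ on $[0,1)\cong\bt$ with $U=\int_{[0,1)}e^{2\pi i\theta}\,dE(\theta)$, and let $\mu_U$ denote the scalar spectral measure of $v_0$, i.e.\ $\mu_U(A)=\|E(A)v_0\|^2$, a Borel probability measure on $[0,1)$. Under the standard unitary equivalence $\Span\{U^nv_0:n\in\bz\}\simeq L^2([0,1),\mu_U)$ which sends $U^nv_0$ to the character $\theta\mapsto e^{2\pi in\theta}$, the hypothesis that $\{U^nv_0:\alpha n\in S\}$ is an orthonormal basis for $\H$ translates into the assertion that $\{\theta\mapsto e^{2\pi in\theta}:\alpha n\in S\}$ is an orthonormal basis for $L^2([0,1),\mu_U)$ (and in particular forces $\H$ to coincide with the cyclic subspace of $v_0$).

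Next, I would define $\mu_0$ to be the pushforward of $\mu_U$ under the rescaling map $\theta\mapsto\theta/\alpha$, a Borel probability measure supported in $[0,1/\alpha)$. To check that $S$ is a spectrum for $\mu_0$, I would carry out the direct change of variables $g=\theta/\alpha$: for $s=\alpha n$ and $s'=\alpha n'$ in $S$ one has $e_s(g)=e^{2\pi i\alpha n g}=e^{2\pi in\theta}$, so
$$\int_0^{1/\alpha}\cj{e_s(g)}e_{s'}(g)\,d\mu_0(g)=\int_0^1 e^{-2\pi in\theta}e^{2\pi in'\theta}\,d\mu_U(\theta)=\delta_{s,s'}.$$
The same change of variables delivers an isometric isomorphism $L^2([0,1/\alpha),\mu_0)\to L^2([0,1),\mu_U)$ sending $e_s$ to $\theta\mapsto e^{2\pi in\theta}$, so completeness of $\{e_s:s\in S\}$ in $L^2(\mu_0)$ is immediate from the completeness already noted in $L^2(\mu_U)$.

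There is essentially no single obstacle beyond careful bookkeeping: the spectral theorem furnishes the $L^2(\mu_U)$ model in which the discrete orthonormality is a classical statement about exponentials on the circle, and the rescaling $\theta=\alpha g$ realizes the measure on $[0,1/\alpha)$ required by the conclusion. Equivalently, one could argue via Theorem \ref{th5.1} by extending $U$ to the strongly continuous one-parameter unitary group $V(t):=\int_{[0,1)}e^{2\pi it\theta/\alpha}\,dE(\theta)$; then $V(\alpha n)=U^n$, so $\{V(s)v_0:s\in S\}$ is an orthonormal basis for $\H$, and the theorem produces a measure on $\br$ which, by the construction of $V$ from $E$, is automatically supported in $[0,1/\alpha)$.
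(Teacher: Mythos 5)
Your proof is correct and is essentially the paper's argument: the paper applies Theorem \ref{th5.1} to the discrete group $\alpha\bz$, whose dual is the circle $[0,\frac1\alpha)$, and there the SNAG theorem reduces to exactly the spectral theorem for the single unitary $U$, with $\mu_0$ being the scalar spectral measure $\|dP_U(g)v_0\|^2$ that you construct. Your only cosmetic deviation is working on $[0,1)$ and rescaling by $\theta\mapsto\theta/\alpha$ rather than identifying the dual group with $[0,\frac1\alpha)$ from the start.
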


\begin{proof}
The dual of the group $\alpha\bz$ is $\bt=[0,\frac1\alpha)$ with addition modulo $\frac1\alpha$; the duality pairing is $\ip{t}{k\alpha}=e^{2\pi itk\alpha}$ for $t\in[0,\frac1\alpha),k\in\bz$. $U$ gives a representation of $\alpha\bz$ on $\H$ by $U(\alpha k)=U^k$. According to Theorem \ref{th5.1}, there is a measure $\mu_0$ on $[0,\frac1a)$ such that $S$ is a spectrum for $\mu_0$, in this duality. But the characters on $[0,\frac1a)$ are just restrictions of characters on $\br$ (which are the exponential functions $e_t$); therefore the result follows.
\end{proof}

\begin{proposition}\label{pr2.6}
Let $A=\{0,1,\dots,n-2,a\}$ with $n\in\bn$, $n\geq3$ and $a\in\br$, $a\neq0,1,\dots,n-2$. Then $A$ is spectral if and only if $a$ is rational and in its reduced form $a=p/q$, with $(p+q)\equiv0\mod n$.
\end{proposition}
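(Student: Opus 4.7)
The plan is to reduce the spectral condition on $A$ to a concrete matrix problem via Theorem~\ref{th5.1} and Remark~\ref{rem2.2}. Since $A$ is finite, the measure $\mu_0$ of the theorem is atomic on a finite set $B\subset\br$ of cardinality $n$, so ``$A$ is spectral'' amounts to the existence of $B=\{b_0,\dots,b_{n-1}\}$ such that $H_{jk}=e^{2\pi i a_j b_k}$ satisfies $HH^*=nI$, with $a_0,\dots,a_{n-2}=0,\dots,n-2$ and $a_{n-1}=a$. Since translating $B$ preserves the spectral property, I normalize $b_0=0$.

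Step 1 (structure of $B$). Row orthogonality among the rows indexed by $0,1,\dots,n-2$ gives $\sum_l e^{2\pi i d b_l}=0$ for $d=1,\dots,n-2$. Setting $z_l=e^{2\pi i b_l}$, these are the power-sum relations $p_1=\dots=p_{n-2}=0$, so by Newton's identities the elementary symmetric polynomials $\sigma_1,\dots,\sigma_{n-2}$ of the $z_l$ vanish and $P(z):=\prod_l(z-z_l)=z^n+\alpha z+\beta$ for some $\alpha,\beta\in\bc$. A reciprocal-polynomial argument---namely, since $|z_l|=1$ the polynomial $z^n\overline{P(1/\bar z)}=\bar\beta z^n+\bar\alpha z^{n-1}+1$ has the same roots as $P$, hence coincides with $\bar\beta P(z)$, and comparing the $z^{n-1}$ coefficients yields $\bar\alpha=0$ for $n\ge 3$---forces the $z_l$ to be the $n$ roots of $-\beta$, equally spaced on the unit circle. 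Combined with our normalization $b_0=0$, this gives $b_l=l/n+k_l$ for integers $k_l$ with $k_0=0$.

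Step 2 (column orthogonality). The orthogonality of column $0$ with column $k$ reads $\sum_{j=0}^{n-2}e^{2\pi i j b_k}+e^{2\pi i a b_k}=0$. Because $e^{2\pi i j k_k}=1$ for $j,k_k\in\bz$, the first sum is $\sum_{j=0}^{n-2}e^{2\pi i jk/n}=-e^{2\pi i(n-1)k/n}$, so the relation simplifies to
\[
e^{2\pi i a b_k}=e^{2\pi i(n-1)k/n},\qquad\text{equivalently}\qquad (a-n+1)k+nak_k\in n\bz,\qquad k=1,\dots,n-1.
\]
Orthogonality between two non-zero columns $k,k'$ follows by subtracting the two corresponding relations, so these $n-1$ congruences are the only constraints on the $k_l$.

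Step 3 (Diophantine analysis). If $a$ were irrational, the $k=1$ relation would force $a(1+nk_1)\in\bz$, which is impossible unless $1+nk_1=0$---impossible since $n\ge 3$. Hence $a=p/q$ in lowest terms. Clearing denominators in the congruence gives $k(p+q)+npk_k\equiv 0\pmod{nq}$, and the $k=1$ case produces the necessary condition $n\mid p+q$. Conversely, writing $p+q=nr$, the congruence $pk_k\equiv -rk\pmod q$ is solvable because $\gcd(p,q)=1$; the resulting set $B=\{l/n+k_l:l=0,\dots,n-1\}$ has $n$ distinct elements and satisfies all the orthogonality relations, exhibiting $A$ as spectral. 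I expect the two technical pieces---the lemma on roots of $z^n+\alpha z+\beta$ lying on the unit circle and the modular bookkeeping in Step~3---to be the main work; once they are in hand the equivalence drops out directly.
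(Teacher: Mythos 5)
Your argument is correct, but it takes a genuinely different route from the paper's. The paper's proof is deliberately an advertisement for the wandering-vector formalism: writing $v_x=U(x)v_0$ and using the bookkeeping rules of Remark \ref{rem2.3}, it shows that $U(1)$ and $U(a)$ each permute the $n$ one-dimensional subspaces $\bc v_x$ cyclically (in opposite directions), invokes Lemma \ref{lem2.5} --- a connectedness argument for the one-parameter group --- to force $a\in\Q$, reads off $p+q\equiv 0\pmod n$ by applying $U(qa)=U(p)$ to $v_0$, and proves sufficiency by building $U(1/q)$ explicitly on $l^2(\{0,\dots,n-1\})$ via Lemma \ref{lem2.6}. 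You bypass the operator theory entirely and attack the equivalent complex Hadamard matrix condition head-on. I checked the pieces you flagged as the main work and they hold up: Newton's identities do give $\sigma_1=\cdots=\sigma_{n-2}=0$ from the integer-row orthogonality; the reciprocal-polynomial comparison kills the $z$-coefficient precisely because $n-1\notin\{0,1,n\}$ for $n\ge 3$, so every candidate spectrum has the form $\{l/n+k_l\}$; the column-$0$ relations $e^{2\pi i a b_k}=e^{2\pi i(n-1)k/n}$ do imply orthogonality of every pair of nonzero columns by division, so they are indeed the complete set of constraints; the $k=1$ congruence forces rationality since $1+nk_1=0$ is impossible; and clearing denominators correctly yields $k(p+q)+npk_k\equiv 0\pmod{nq}$, solvable in $k_k$ exactly when $n\mid p+q$ because $\gcd(p,q)=1$. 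The trade-off: your approach is more elementary (no SNAG theorem, no continuity argument) and gives strictly more information --- an explicit parametrization of \emph{all} spectra $B$ of $A$ --- while the paper's approach illustrates techniques that the authors intend to apply in situations where the spectrum cannot be computed in closed form.
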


\begin{proof}
Assume that $A$ is spectral and use the notation described above. We have $0\nar{1}1, 1\nar{1}2,\dots,n-3\nar{1}n-2$. With the rules in Remark \ref{rem2.3}, we obtain that $a\nar{1}\{0,a\}$. Then $1\nar{a-1}a\nar{1}\{0,a\}$ so $1\nar{a} \{0,a\}$. But since $0\nar{a}a$ we get with Remark \ref{rem2.3} that $1\nar{a}0$. Then $a\nar{1-a}1\nar{a}0$ so $a\nar{1}0$. Then only possibility for $n-2$ is $n-2\nar{1}a$. 

Hence, if we denote by $v_{n-1}:=v_a$ we have that $v_k\nar{1}v_{(k+1)\mod n}$ and $v_k\nar{m}v_{(k+m)\mod n}$ for $m\in\bz$ and $k\in\{0,\dots,n-1\}$. So $U(m)$ permutes cyclically the one-dimensional subspaces generated by $v_k$, $k=0,\dots,n-1$. 

Next, we compute how $U(a)$ acts on these subspaces. We $0\nar{a}a$. For $k\in\{0,\dots,n-2\}$ we have 
$k\nar{a-k}a\nar{k}(k+n-1)\mod n=k-1$ so $k\nar{a}k-1$. The only remaining possibility for $a$ is $a\nar{a} n-1$. Thus $v_k\nar{a}v_{(k-1)\mod n}$, for $k\in\{0,\dots,n-1\}$. 

So $U(a)$ permutes cyclically the subspaces $v_k$. With Lemma \ref{lem2.5} we see that $a$ has to be rational, $a=p/q$, irreducible, for some $p,q\in\bz$. Then $qa=p$ so $U(qa)=U(p)$. Then, apply this operator to the subspace of $v_0$, since 
$v_0\nar{qa} v_{-q\mod n}$ and $v_0\nar{p} v_{p\mod n}$ we get that $-q\equiv p\mod n$ so $p+q\equiv 0\mod n$. 

For the converse, assume $a$ has the given form. We will define a unitary operator $U(\frac1q)$ on $l^2(\{0,1,\dots,n-1\})$ as in Lemma \ref{lem2.6}. Since $p$ and $q$ are relatively prime, there exist $k,l\in\bz$ such that $kp+lq=1$. 
Let $\delta_i$ be the canonical basis in $l^2(\{0,\dots,n-1\})$. Define $U(\frac1q)\delta_i=\delta_{(i+l-k)\mod n}$ for $i=0,\dots,n-1$. Then define $U(\frac jq)=U(\frac1p)^j$ for $j\in\bz$. 

We have 
$$U(1)\delta_i=U(\frac1q)^q\delta_i=\delta_{(i+ql-qk)\mod n}=\delta_{(i+1-k(p+q))\mod n}=\delta_{(i+1)\mod n}.$$
Also
$$U(a)\delta_i=U(\frac1q)^p\delta_i=\delta_{(i+pl-pk)\mod n}=\delta_{(i-1+l(p+q))\mod n}=\delta_{(i-1)\mod n}.$$

Then, we see that $\{U(0)\delta_0,\dots,U(n-2)\delta_0,U(a)\delta_0\}=\{\delta_0,\dots,\delta_{n-2},\delta_{n-1}\}$ is an orthonormal basis. With Lemma \ref{lem2.6} we get that $A$ is spectral.

\end{proof}

We consider now spectral sets $A$ with 3 elements. The spectral property is invariant under translations and scaling. Therefore, using a translation we can assume that 0 is in $A$ and then, rescaling we can assume that also $1$ is in $A$. 
\begin{corollary}\label{cor2.7}
Consider a set $A$ with 3 elements $A=\{0,1,a\}$. Then $a$ is spectral if and only if $a$ is rational and if $a=p/q$ in its reduced form then $p+q$ is divisible by 3.
\end{corollary}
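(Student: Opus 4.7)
The plan is to derive the corollary as the special case $n=3$ of Proposition \ref{pr2.6}. With $n=3$, the general three-element set $\{0,1,\ldots,n-2,a\} = \{0,1,a\}$ of the proposition coincides with the set $A$ in the corollary, and the hypothesis $a \neq 0, 1, \ldots, n-2$ becomes $a \neq 0, 1$, which is already guaranteed by $|A|=3$. The divisibility condition $(p+q) \equiv 0 \pmod n$ from the proposition specializes to $3 \mid (p+q)$, which is exactly the condition claimed in the corollary.

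In more detail, I would first invoke the discussion immediately preceding the corollary: spectrality is invariant under affine transformations of $\br$, so after a translation and a dilation any $3$-element spectral candidate may be normalized to the form $\{0,1,a\}$ with $a \notin \{0,1\}$. This is the setup for which the corollary is stated. Then, since $n=3$ satisfies the hypothesis $n \geq 3$ of Proposition \ref{pr2.6}, one applies the proposition verbatim: $A = \{0,1,a\}$ is spectral if and only if $a$ is rational with reduced form $a = p/q$ satisfying $p+q \equiv 0 \pmod 3$.

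There is essentially no obstacle here since the work is done in Proposition \ref{pr2.6}; the corollary is a direct specialization. The only point worth double-checking is the matching of conventions for the reduced form $p/q$ (for instance, whether $q > 0$ is required and how the sign of $p$ is handled), but this plays no role in the parity/divisibility statement $p+q \equiv 0 \pmod 3$, since replacing $(p,q)$ by $(-p,-q)$ preserves $p+q \pmod 3$.
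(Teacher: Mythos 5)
Your proposal is correct and matches the paper's intent exactly: the corollary is the case $n=3$ of Proposition \ref{pr2.6}, applied after the translation-and-scaling normalization described just before the statement. The paper gives no separate proof precisely because it is this direct specialization.
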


\begin{acknowledgements}
This work was done while the first named author (PJ) was visiting the University of Central Florida. We are grateful to the UCF-Math Department for hospitality and support. The authors are pleased to thank Professors Deguang Han and Qiyu Sun for helpful conversations. PJ was supported in part by the National Science Foundation, via a Univ of Iowa VIGRE grant. This work was partially supported by a grant from the Simons Foundation (\#228539 to Dorin Dutkay).
\end{acknowledgements}
\bibliographystyle{alpha}
\bibliography{eframes}

\end{document}